\documentclass[12pt]{amsart}

\title[Diameters of hyperbolic surfaces]{A sharper bound on the minimal possible diameter of a closed hyperbolic surface}

\author{Joffrey Mathien}
\address[Joffrey Mathien]{Universität Wien, Fakultät für Mathematik. Oskar Morgenstern Platz 1, Wien 1090, Austria}
\email{joffrey.mathien@univie.ac.at}

\author{Bram Petri}
\address[Bram Petri]{Institut de Math\'ematiques de Jussieu--Paris Rive Gauche and  Institut universitaire de France ; Sorbonne Universit\'e and Universit\'e Paris Cit\'e, CNRS, IMJ-PRG, F-75005 Paris, France}
\email{bram.petri@imj-prg.fr}

\date{\today}

\usepackage{amsmath,amsfonts,amssymb,amsthm,graphicx,overpic,mathtools}
\usepackage{float,enumitem}
\usepackage{color,xcolor}
\usepackage[colorinlistoftodos]{todonotes}
\usepackage{multirow}
\usepackage{pgfplots}
\usepackage{subcaption}
\usepackage[colorlinks=true,linkcolor=red,citecolor=blue]{hyperref}

\pgfplotsset{compat=1.7}

\usepackage[margin=2.7cm]{geometry}  
\usepackage[indent]{parskip}

\numberwithin{equation}{section}

\newtheorem{thm}{Theorem}[section]
\newtheorem{prop}[thm]{Proposition}
\newtheorem{prp}[thm]{Proposition}
\newtheorem{cor}[thm]{Corollary}
\newtheorem{lem}[thm]{Lemma}

\theoremstyle{definition}

\newtheorem{defn}[thm]{Definition}

\newcommand{\nc}{\newcommand}
\nc{\dmo}{\DeclareMathOperator}
\usepackage{dsfont}

\nc{\abs}[1]{\left| #1 \right|}
\nc{\bigO}[1]{O\left(#1\right)}
\nc{\card}[1]{\left|#1\right|}
\nc{\ceil}[1]{\left\lceil #1 \right\rceil}
\nc{\CC}{\mathbb{C}}
\nc{\dilog}{\mathcal{L}}
\nc{\floor}[1]{\left\lfloor #1 \right\rfloor}
\nc{\ind}{\mathds{1}}
\nc{\ZZ}{\mathbb{Z}}
\nc{\len}[1]{\left| #1 \right|}
\nc{\littleo}[1]{o\left(#1\right)}
\dmo{\Mat}{Mat}
\nc{\NN}{\mathbb{N}}
\nc{\norm}[1]{\left|\left| #1 \right|\right|}
\nc{\QQ}{\mathbb{Q}}
\nc{\RR}{\mathbb{R}}
\nc{\st}[2]{\left\{\, #1 \,:\, #2\,\right\}}
\dmo{\supp}{supp}
\nc{\tr}[1]{\mathrm{tr}\left(#1\right)}
\nc{\what}{\widehat}
\dmo{\im}{Im}
\nc{\eps}{\varepsilon}
\dmo{\li}{li}
\dmo{\arccosh}{arccosh}

\dmo{\area}{area}
\dmo{\conv}{conv}
\dmo{\diam}{diam}
\dmo{\DD}{\mathbb{D}}
\dmo{\dist}{\mathrm{d}}
\nc{\HH}{\mathbb{H}}
\dmo{\Isom}{Isom}
\dmo{\MCG}{MCG}
\dmo{\MPL}{MPL}
\dmo{\Mod}{\mathcal{M}}
\dmo{\PL}{PL}
\nc{\Sphere}{\mathbb{S}}
\dmo{\sys}{sys}
\dmo{\kiss}{Kiss}
\dmo{\Teich}{\mathcal{T}}
\nc{\Torus}{\mathbb{T}}
\dmo{\vol}{vol}
\dmo{\WP}{WP}

\dmo{\convTV}{\;\stackrel{\mathrm{TV}}{\longrightarrow}\;}
\nc{\ExV}[2]{\mathbb{E}_{#1}\left[#2\right]}
\dmo{\EE}{\mathbb{E}}
\nc{\Pro}[2]{\mathbb{P}_{#1}\left[#2\right]}
\dmo{\PP}{\mathbb{P}}
\nc{\distTV}[2]{\mathrm{d}_{\rm TV}\left(#1,#2\right)}
\dmo{\UU}{\mathbb{U}}
\nc{\Var}[2]{\mathbb{V}\mathrm{ar}_{#1}\left[#2\right]}

\dmo{\alt}{\mathfrak{A}}
\dmo{\Aut}{Aut}
\dmo{\Fix}{Fix}
\dmo{\GL}{GL}
\dmo{\Hom}{Hom}
\dmo{\id}{Id}
\dmo{\PGL}{PGL}
\dmo{\PSL}{PSL}
\dmo{\PO}{PO}
\dmo{\Rep}{Rep}
\dmo{\SL}{SL}
\dmo{\SO}{SO}
\dmo{\sym}{\mathfrak{S}}
\dmo{\inv}{\mathcal{I}}
\dmo{\orb}{\mathcal{O}}
\dmo{\stab}{Stab}

\dmo{\calA}{\mathcal{A}}
\dmo{\calB}{\mathcal{B}}
\dmo{\calC}{\mathcal{C}}
\dmo{\calD}{\mathcal{D}}
\dmo{\calE}{\mathcal{E}}
\dmo{\calF}{\mathcal{F}}
\dmo{\calG}{\mathcal{G}}
\dmo{\calH}{\mathcal{H}}
\dmo{\calI}{\mathcal{I}}
\dmo{\calJ}{\mathcal{J}}
\dmo{\calK}{\mathcal{K}}
\dmo{\calL}{\mathcal{L}}
\dmo{\calM}{\mathcal{M}}
\dmo{\calN}{\mathcal{N}}
\dmo{\calO}{\mathcal{O}}
\dmo{\calP}{\mathcal{P}}
\dmo{\calQ}{\mathcal{Q}}
\dmo{\calR}{\mathcal{R}}
\dmo{\calS}{\mathcal{S}}
\dmo{\calT}{\mathcal{T}}
\dmo{\calU}{\mathcal{U}}
\dmo{\calV}{\mathcal{V}}
\dmo{\calW}{\mathcal{W}}
\dmo{\calX}{\mathcal{X}}
\dmo{\calY}{\mathcal{Y}}
\dmo{\calZ}{\mathcal{Z}}

\nc{\klav}{Klav\v{z}ar}

\nc{\bi}{\mathbf{i}}
\nc{\bj}{\mathbf{j}}
\nc{\bk}{\mathbf{k}}

\begin{document}

\begin{abstract} We prove that the minimal possible diameter of a closed hyperbolic surface of genus $g$ is at most $\log(g)+25 \log \log(g) + O(1)$.
\end{abstract}

\maketitle

\section{Introduction}

The diameter is the simplest measure of connectivity of a closed hyperbolic surface, other examples being its spectral gap and its Cheeger constant. In \cite{BudzinskiCurienPetri}, Budzinski, Curien and the second named author proved that
\[
\min_{X\in\calM_g}\left\{\mathrm{diam}(X) \right\} \stackrel{g\to\infty}{=} \log(g) + o(\log(g)),
\]
where $\calM_g$ denotes the moduli space of closed and orientable hyperbolic surfaces of genus $g$. The lower bound in this estimate is classical, so in order to prove it, one needs to build surfaces of high genus with a very small diameter. This is done using a random construction based on randomly gluing hyperbolic pairs of pants together. This construction was generalized by the first named author in \cite{Mathien}.

The best known lower bound on the diameter of a hyperbolic surface of genus $g$ is due to Bavard \cite{Bavard} and states that for $X\in\calM_g$,
\[
\mathrm{diam}(X) \geq \mathrm{arccosh}\left(\frac{1}{\sqrt{3}\tan(\pi/(12g-6))}\right) \stackrel{g\to\infty}{=}  \log(g) + \log(8\sqrt{3}) + o(1) .
\]
In particular, there is a gap left in the second order term. In the case of regular graphs, that inspired the works above, Bollobás and Fernandez-de-la-Vega \cite{BollobasFernandezdelaVega} proved a sharper bound on the second order term. Namely, their bound is of the order $\log\log$ of the size of the graph. The main reason that this doesn't automatically apply to the surface setting is geometric: estimates on the area growth in certain hyperbolic surfaces need to be made more uniform than those applied in \cite{BudzinskiCurienPetri}.

The goal of this note is to work this out and confirm the suspicion that was pronounced in \cite[Section 4]{BudzinskiCurienPetri}, using an idea from \cite{Mathien}. The result we obtain is:
\begin{thm}\label{th:main} There exists a universal constant $C>0$ such that for all $g\geq 2$,
\[
\min_{X\in\calM_g}\left\{\mathrm{diam}(X) \right\} \leq \log(g) + 25\cdot \log\log(g) + C
\]
\end{thm}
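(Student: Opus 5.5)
We will follow the random construction of \cite{BudzinskiCurienPetri}. Take a random trivalent graph $G$ on $2g-2$ vertices from the configuration model, replace every vertex by a hyperbolic pair of pants whose three boundary geodesics all have a common length $\ell$, and glue along the edges. Instead of a fixed $\ell$ we let $\ell=\ell_g$ grow slowly, for instance $\ell \asymp \log\log(g)$ or a large constant times it; this is the idea taken from \cite{Mathien}. The twists can be fixed, say to zero, and we condition on $G$ being connected (this costs probability $o(1)$). The aim is to show that the resulting surface $X$ has diameter at most $\log(g)+25\log\log(g)+C$ with positive probability, and this gives the theorem.

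First, we analyse the geometry of a single pants with long boundaries. As $\ell\to\infty$, each pants degenerates onto a thin neighbourhood of a trivalent tree-like spine: three seams of length $\approx 2e^{-\ell/4}$ whose region grows exponentially. The effective growth comes from geodesic paths that pass through many pants. In a region of $X$ whose dual graph is a tree, such paths have length roughly $\ell/2$ per pants crossed, and branch by a factor $2$ at each pants. Counting area should therefore give a ball of radius $r$ of area about $e^{r}$ up to polynomial-in-$\ell$ factors. The key quantitative statement is a uniform lower bound: for any point $x$ and radius $r$ such that the ball is tree-like, the set of boundary geodesics reached within distance $r$ has at least $c\,\ell^{-a} e^{r}$ points, with explicit $a$ and a constant $c$ independent of $\ell$ and $g$.

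Next comes the graph step, in the style of Bollobás--Fernandez-de-la-Vega \cite{BollobasFernandezdelaVega}. With high probability, every ball of radius about $\tfrac12\log_2 g - O(\log\log g)$ in $G$ contains at most one cycle, so the tree-like growth bound applies up to a bounded loss. We then run exploration processes from two points $x,y$ until each frontier has size about $\sqrt{g}\,(\log g)^{b}$. This needs radius $\tfrac12\log g + O(\log\log g)$ on the surface, using the area estimate with $\ell\sim\log\log g$. At that point the two frontiers are joined by an unexplored half-edge pairing except with probability $\exp(-(\log g)^{2b})$. A union bound over a $1$-net of $X$, which has $O(g)$ points, then gives $\diam(X)\le \log g + K\log\log g + C$. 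We must also control the extra length from crossing the final glued pants (about $\ell$) and from the entry from a point into the spine (about $\ell$). The constant $25$ is obtained by collecting the exponents: the polynomial loss $\ell^{a}$ in area, the choice of $\ell$ needed so that seams are short enough, and the $(\log g)^{b}$ margin.

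The main obstacle is the uniform area-growth estimate. We need hyperbolic balls in a tree-like gluing of long-boundary pants to grow at rate $e^{r}$ with only polynomial-in-$\ell$ losses, uniformly in the starting point and in the local combinatorics. We also need this estimate to be robust to one cycle in the ball. To do this we would first work in the universal cover of the tree-like region, parametrising the frontier by pants-boundary points. We would then compute the exact lengths of the shortest paths crossing $k$ pants with prescribed left/right turns via hexagon trigonometry, and use convexity to compare them with $k\ell/2 + O(k e^{-\ell/4})$ plus entry costs.
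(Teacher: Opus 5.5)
There is a genuine gap. It sits in the step you yourself single out as the main obstacle, and the correct version of that step is what produces both the choice of $\ell$ and the coefficient $25$.

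\textbf{The growth estimate is false as stated.} The tree of pants $T_\ell$ is the double of the convex hull $\Gamma_\ell\cdot H_\ell$ of the convex cocompact reflection group $\Gamma_\ell$. So the number of pants within distance $r$ grows like $e^{\delta_\ell r}$, where $\delta_\ell<1$ is the critical exponent of $\Gamma_\ell$, not like $e^{r}$. The ratio to $e^r$ therefore decays like $e^{-(1-\delta_\ell)r}$. No bound of the form $c\,\ell^{-a}e^{r}$ with $c$ independent of $\ell,g$ can hold over all tree-like radii: fix $\ell$ and let $g$, hence $r$, go to infinity.

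\textbf{What is needed instead.} There are two inputs.
(i) The estimate $1-\delta_\ell\le Ce^{-\ell/4}$. The paper gets this from the Elstrodt--Sullivan formula $\delta(1-\delta)=\lambda_0$ together with Dodziuk--Pignataro--Randol--Sullivan. These are applied to the double of $H_\ell$ along its long sides, a pair of pants whose boundary length is $\sim 4e^{-\ell/4}$.
(ii) The effective count $\log N_\ell(R)\ge\delta_\ell R-\ell-8C_\ell-\log(20/3)$. This comes from quasi-submultiplicativity of $N_\ell$ and Fekete's lemma.

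\textbf{Where $25$ comes from.} At radius $\approx\frac12\log g$, the loss $(1-\delta_\ell)\cdot\frac12\log g\asymp e^{-\ell/4}\log g$ forces $\ell\ge 4\log\log g$. With $\ell=c\log\log g$ and $c<4$, this construction loses order $(\log g)^{1-c/4}\gg\log\log g$ in the radius. On the other side, the $O(\ell)$ losses in (ii) and in the exploration penalise large $\ell$. Taking $\ell=4\log\log g$ and $\tau_2=\sqrt{25g\log g}$ gives $R_{\tau_2}\le\log\tau_2+Ce^{-\ell/4}\log\tau_2+3\ell+C=\frac12\log g+\frac{25}{2}\log\log g+C$. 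Your plan of ``collecting the exponents'' with unspecified $a,b$ and ``$\ell\asymp\log\log g$ or a large constant times it'' never identifies this trade-off. So no specific coefficient, in particular $25$, actually comes out.

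\textbf{The geometric heuristic is also off.} If paths cost $\ell/2$ per pants and branch by a factor $2$, the growth rate would be $2\log 2/\ell$, not $1$. In fact $H_\ell$ converges to an ideal triangle, and hexagon centers adjacent across a pants curve are at distance $2C_\ell=O(1)$. The quantity $\ell/2$ is only the distance between the two hexagon centers of the \emph{same} pants, through a seam. Comparing geodesics with $k\ell/2+O(ke^{-\ell/4})$ is therefore not a route to the count. The short seams do matter, but only through the boundary length $\sim4e^{-\ell/4}$ of the short-boundary pants above, i.e.\ through $\lambda_0$ and $\delta_\ell$.

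\textbf{A secondary point on cycles.} It is not true that with high probability every ball of radius $\frac12\log_2 g-O(\log\log g)$ carries at most one cycle. The expected number of vertices whose ball of size $m$ contains two independent cycles is of order $m^4/g$, which diverges for $m\approx\sqrt g/\mathrm{polylog}(g)$. The paper instead allows $k=3$ bad steps up to $\tau_1=g^{1/2-\varepsilon}$ and $\log^3 g$ bad steps afterwards. It then runs the counting on the descendants of a frontier curve below which no bad step occurs.
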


We can also think of this theorem in terms of covering the surface, in an analogous way to how the thickness (or covering density) of a lattice in $\mathbb{R}^n$ relates to the diameter of the corresponding flat torus. If $X$ is a closed hyperbolic surface and $x\in X$, the covering radius $\mathrm{rad}_X(x)$ at $x$ is the smallest radius such that the disk around $x$ of that radius covers $X$. Equivalently, this is the minimal $r>0$ such that the universal covering map is surjective when restricted to the closed disk $B_r(\widetilde{x})$ of radius $r$ around a lift $\widetilde{x}\in\mathbb{H}^2$ of $x$. The lack of injectivity on this disk can be quantified using the thickness
\[
\theta(x) = \frac{\mathrm{area}(B_{\mathrm{rad}_X(x)}(\widetilde{x}))}{\mathrm{area}(X)}. 
\]
at $x$. In analogy with the theory of lattice packings, we call the average\footnote{In the case of flat tori, one doesn't take the average, because the covering radius is constant.} of this quantity, i.e. $\Theta(X) := \frac{1}{\mathrm{area}(X)}\int_X \theta(x)\;d\mu(x)$, where $\mu$ denotes the hyperbolic area measure, the \textbf{thickness} of $X$. Theorem \ref{th:main} implies the following corollary:
\begin{cor}
There exists a universal constant $C>0$ such that
\[
\min_{X\in\calM_g}\left\{\Theta(X) \right\} \leq C \cdot \log(g)^{25}
\]
for all $g \geq 2$.
\end{cor}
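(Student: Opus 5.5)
Since the covering radius at any point is at most the diameter, the plan is to deduce the corollary directly from Theorem \ref{th:main} by bounding the thickness pointwise. Fix $g\geq 2$ and take $X\in\calM_g$ with $\diam(X)\leq \log(g)+25\log\log(g)+C_0$, where $C_0$ is the constant of Theorem \ref{th:main}. For every $x\in X$ the closed disk of radius $\diam(X)$ around $x$ covers $X$, so $\mathrm{rad}_X(x)\leq \diam(X)$. The area of a hyperbolic disk is monotone in the radius, so
\[
\theta(x)\leq \frac{\area(B_{\diam(X)}(\widetilde{x}))}{\area(X)} = \frac{2\pi(\cosh(\diam(X))-1)}{4\pi(g-1)}.
\]
Here we use that a hyperbolic disk of radius $r$ has area $2\pi(\cosh r-1)$, and Gauss--Bonnet gives $\area(X)=4\pi(g-1)$.

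Next we bound $\cosh$ crudely by $\cosh r-1\leq e^r/2$. This gives
\[
\theta(x)\leq \frac{e^{\diam(X)}}{4(g-1)}\leq \frac{e^{C_0}\, g\,\log(g)^{25}}{4(g-1)}\leq \frac{e^{C_0}}{2}\log(g)^{25},
\]
where the last step uses $g/(g-1)\leq 2$ for $g\geq 2$. The bound is uniform in $x$, so averaging over $X$ gives $\Theta(X)\leq \frac{e^{C_0}}{2}\log(g)^{25}$. Hence $\min_{\calM_g}\Theta\leq C\log(g)^{25}$ with $C=e^{C_0}/2$.

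No step here is a real obstacle. The one point to watch is small genus: at $g=2$ we have $\log\log(g)<0$, but Theorem \ref{th:main} is stated for all $g\geq 2$ with a universal constant, so the displayed estimate holds verbatim there too and no separate small-genus case is needed.
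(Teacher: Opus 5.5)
Your proof is correct and follows the paper's route: the paper simply notes that $\mathrm{rad}_X(x)\leq \diam(X)$ and applies Theorem \ref{th:main}. You additionally write out the disk-area and Gauss--Bonnet computation that turns $e^{\diam(X)}/(4(g-1))$ into $C\cdot\log(g)^{25}$, which the paper leaves implicit.
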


\begin{proof}
This is immediate from the fact that $\mathrm{rad}_X(x)\leq \mathrm{diam}(X)$ for all $x\in X$.
\end{proof}

Like in the aforementioned papers, we use a random construction. In fact, we use the exact same random construction as in \cite{BudzinskiCurienPetri}. That is, for $g\geq 2$ and $\ell>0$, we define a random closed and orientable hyperbolic surface $S_{g,\ell}$ of genus $g$. To describe $S_{g,\ell}$, we will write $P_\ell$ for a hyperbolic pair of pants with three boundary components of length $\ell$. The surface $S_{g,\ell}$ is now obtained by sampling a random cubic graph on $2g-2$ vertices using the configuration model and then gluing $2g-2$ copies of $P_\ell$ together with twist parameter $0$ according to the combinatorics of this graph. 

The main technical result of this note is a sharper bound on $\mathrm{diam}(S_{g,\ell})$ than the one that was derived in \cite{BudzinskiCurienPetri}. We prove that, if we set $\ell=\ell(g)=4\log\log(g)$, then with high probability\footnote{A sequence of events $(A_g)_{g\geq 2}$ is said to hold \emph{with high probability as $g\to\infty$} if $\lim_{g\to\infty} \PP(A_g) = 1$.} as $g\to\infty$, the diameter of $S_{g,\ell}$ satisfies the bound from the theorem. We note that using the same method, the constant $C>0$ that appears in the theorem could also be made explicit. Because we have no reason to believe that the multiplicative constant in front of $\log\log(g)$ we find is optimal, we will not pursue this.

The remainder of this note consists of three sections. In Section \ref{sec_crit_exp} we present a bound on the critical exponent on a certain Fuchsian group $\Gamma_\ell$. This bound was first derived by McMullen \cite{McMullen}. For completeness, we present a proof. In Section \ref{sec_lattice_points}, we use a sub-multiplicativity argument to prove an effective estimate on the number of points at bounded distance from a fixed point in an orbit of $\Gamma_\ell$ on the hyperbolic plane $\HH^2$. In Section \ref{sec_proof}, we combine the two inputs and prove the main theorem.

\subsection*{Acknowlegdements}

We would like to thank Asma Hassannezhad for pointing out a useful reference. We also acknowledge funding from the grant ANR-23-CE40-0020-02 ``MOST''. J.M. is supported by  Austrian Science Fund (FWF) grant 10.55776/PAT1878824 on “Random Conformal Fields". B.P. would like to thank the Isaac Newton Institute for Mathematical Sciences, Cambridge, for support and hospitality during the programme Geometric spectral theory and applications, where work on this paper was undertaken. This work was supported by EPSRC grant no EP/Z000580/1.

\section{Critical exponents}\label{sec_crit_exp}

For $\ell>0$, we will let $H_\ell\subset\HH^2$ denote a right-angled hexagon with three non-consecutive sides of length $\ell/2$. We will moreover write $\Gamma_\ell$ for the group generated by the three reflections in the sides of length $\ell/2$ of $H_\ell$. 

Given a Fuchsian group $\Gamma < \mathrm{Isom}(\HH^2)$, we will denote the Hausdorff dimension of its limit set by $\delta(\Gamma)$. If $\Gamma$ is geometrically finite, this quantity is known to coincide with the critical exponent of the group \cite{Patterson}. In what follows, we will need an estimate on $\delta_\ell := \delta(\Gamma_\ell)$ as $\ell\to\infty$. As explained by McMullen in  \cite[Theorem 3.5]{McMullen}, such an estimate follows from the work of Dodziuk--Pignataro--Randol--Sullivan \cite{DodziukPignataroRandolSullivan}. Using work by Colbois \cite{Colbois} and Burger \cite{Burger} this bound could be made more precise. This however won't influence the multiplicative constant in front of $\log\log(g)$ in Theorem \ref{th:main} (but it does influence the constant $C$ in the same theorem), so we won't work this out here.

\begin{lem}\label{lem_crit_exp}
There exists a universal constant $C>0$ such that
\[
\abs{1-\delta_\ell} \leq C\cdot e^{-\ell/4}
\]
for all $\ell \geq 1$.
\end{lem}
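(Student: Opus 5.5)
The plan is to reduce the estimate to a bound on the bottom of the spectrum of a hyperbolic pair of pants whose boundary curves are very short, and then to use the Patterson--Sullivan relation between $\delta$ and $\lambda_0$. Since $\Gamma_\ell$ is discrete of the second kind, we always have $\delta_\ell\leq 1$, so only the upper bound $1-\delta_\ell\leq C e^{-\ell/4}$ is needed. Moreover, for $\ell$ in any compact range $[1,\ell_0]$ the claim is trivial after enlarging $C$. So it suffices to treat $\ell$ large.

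\textbf{Step 1: geometry.} Let $\Gamma_\ell^+$ be the orientation-preserving index-two subgroup of $\Gamma_\ell$; it has the same limit set, hence the same $\delta$. The surface $\HH^2/\Gamma_\ell^+$ is obtained by doubling $H_\ell$ along its three sides of length $\ell/2$ and attaching funnels. Its convex core is therefore a pair of pants whose three boundary geodesics have length $2b$, where $b$ is the length of the remaining sides of $H_\ell$. The right-angled hexagon formula gives
\[
\cosh b = \frac{\cosh^2(\ell/2)+\cosh(\ell/2)}{\sinh^2(\ell/2)} = 1 + O(e^{-\ell/2}),
\]
so $b = O(e^{-\ell/4})$. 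In particular the total boundary length satisfies $L := 6b \leq C_1 e^{-\ell/4}$, and the core has area $2\pi$.

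\textbf{Step 2: test function.} Let $X=\HH^2/\Gamma_\ell^+$, and let $\lambda_0(X)$ be the bottom of its $L^2$-spectrum. We bound it by the Rayleigh quotient of an explicit $f$. On each funnel, use Fermi coordinates $t\geq 0$ from the boundary geodesic, in which the area element is $2b\cosh t\,dt\,d\theta/(2\pi)$. Set $f\equiv 1$ on the convex core and $f=(\cosh t)^{-1}$ in the funnels; this $f$ is Lipschitz and lies in $L^2$. Then
\[
\int_X |\nabla f|^2 \leq L\int_0^\infty \frac{\tanh^2 t}{\cosh t}\,dt = O(L),\qquad \int_X f^2 \geq 2\pi,
\]
so $\lambda_0(X)\leq C_2 L\leq C_3 e^{-\ell/4}$.

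\textbf{Step 3: Patterson--Sullivan.} For geometrically finite $\Gamma$ with $\delta>1/2$, one has $\lambda_0=\delta(1-\delta)$. We need $\delta_\ell>1/2$ for large $\ell$. The Elstrodt--Patterson--Sullivan theorem gives this equivalence: $\delta>1/2$ if and only if $\lambda_0<1/4$, which holds by Step 2 once $C_3e^{-\ell/4}<1/4$. Then $\delta_\ell\geq 1/2$, so
\[
1-\delta_\ell\leq 2\,\delta_\ell(1-\delta_\ell)\leq 2C_3 e^{-\ell/4}.
\]

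The main point requiring care is Step 3, namely invoking the correct form of the relation $\lambda_0=\delta(1-\delta)$ for infinite-area geometrically finite surfaces, and checking $\delta>1/2$ without circularity. I would cite Patterson and Sullivan for both. The hexagon computation and the Rayleigh-quotient estimate are routine; the only thing to watch is that the decay in the funnels must be faster than $(\cosh t)^{-1/2}$ so that $f\in L^2$.
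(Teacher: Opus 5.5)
Your proof is correct, and it takes a different route from the paper on the spectral input.

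The paper invokes the two-sided estimate of Dodziuk--Pignataro--Randol--Sullivan, $\lambda_0\asymp L_0$, which is valid only in the regime $\lambda_0<\frac14$. To show that this regime is reached for large $\ell$, it appeals to McMullen's continuity of Hausdorff dimension. It then reads off $L_0\sim 4e^{-\ell/4}$ from Buser's formulas.

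You observe that, since $\delta_\ell\leq 1$ always, only an upper bound on $\lambda_0$ is needed. You get that bound from an explicit Rayleigh-quotient test function, which amounts to proving the easy half of the DPRS comparison by hand. Because you use the full Patterson--Sullivan dichotomy ($\lambda_0=\frac14$ whenever $\delta\leq\frac12$), the same bound also yields $\delta_\ell>\frac12$ for large $\ell$. The continuity argument therefore becomes unnecessary.

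Your route is self-contained and gives an explicit constant. Since $\int_0^\infty \tanh^2 t/\cosh t\,dt=\pi/4$, your test function gives $\lambda_0\leq \frac{3}{4}b$, hence $1-\delta_\ell\leq \frac32 b\sim 3e^{-\ell/4}$. The paper's heavier citation buys something the lemma does not require: the matching lower bound $1-\delta_\ell\geq c\,e^{-\ell/4}$, which shows the exponent $\frac14$ is sharp.
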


\begin{proof} In the interest of completeness and also because McMullen describes the estimate as a function of a different parameter, we will provide a proof.

If we double $H_\ell$ along its three sides of length $\ell/2$, then we obtain a pair of pants. This pair of pants is the convex core of $\Lambda_\ell\backslash\HH^2$, where  $\Lambda_\ell<\Gamma_\ell$ is the index two subgroup of orientation preserving elements. Since the critical exponent does not change when we pass to a subgroup of finite index, we use $\Lambda_\ell$ instead. 

The Elstrodt--Sullivan formula \cite[Theorem 2.17]{Sullivan} states that 
\[
\delta(\Lambda_\ell)\cdot (1-\delta(\Lambda_\ell)) = \lambda_0(\Lambda_\ell\backslash\HH^2),
\]
where $\lambda_0(\Lambda_\ell\backslash\HH^2)$ denotes the bottom of the spectrum of the Laplacian on $L^2(\Lambda_\ell\backslash\HH^2)$.

Moreover, \cite[Theorem 1.1']{DodziukPignataroRandolSullivan} says that there is a universal constant $A>0$ such that, whenever $\lambda_0(\Lambda_\ell\backslash\HH^2)<\frac{1}{4}$,
\[
\frac{1}{A} \cdot L_0(\Lambda_\ell\backslash\HH^2) \quad \leq \quad \lambda_0(\Lambda_\ell\backslash\HH^2) \quad \leq \quad A \cdot  L_0(\Lambda_\ell\backslash\HH^2),
\]
where $L_0(\Lambda_\ell\backslash\HH^2)$ is the length of the shortest simple closed geodesic that disconnects $\Lambda_\ell\backslash\HH^2$. $\Lambda_\ell\backslash\HH^2$ has exactly three simple closed geodesics that all have the same length and that all disconnect $\Lambda_\ell\backslash\HH^2$. For instance using the continuity of Hausdorff dimension proved in \cite[Theorem 3.1]{McMullen} and the fact that in the limit $\ell\to\infty$, the group is a lattice, we obtain that the condition $\lambda_0(\Lambda_\ell\backslash\HH^2)<\frac{1}{4}$ holds when $\ell$ is large enough.

Now using the formulas from \cite[Page 454]{Buser_book}, one obtains that 
\[
L_0(\Lambda_\ell\backslash\HH^2) \stackrel{\ell\to\infty}{\sim} 4 \cdot e^{-\ell/4}.
\]
So putting this together with the previous two equations, we obtain the lemma.
\end{proof}

\section{Lattice point counting}\label{sec_lattice_points}

The second input we need is a bound on the difference between the number of points in an orbit of $\Gamma_\ell$ at a finite distance from a base point and the asymptotic prediction. The bound we need is a special case of \cite[Corollary 2.11]{Mathien}. 

We start by fixing some notation. We once and for all fix a base point $\mathbf{o}\in H_\ell$. In order make the computations below effective, we will let $\mathbf{o}$ be the unique point that is equidistant form the three sides of length $\ell/2$. Equivalently, this is the center of the largest inscribed disk. The distance from $\mathbf{o}$ to these three sides, that will show up in multiple bounds below, will be denoted $C_\ell$.

We will write $\calT_3=\Gamma_\ell\cdot \mathbf{o}$ for the orbit of this point, and we will call these points \textbf{lattice points}. The reason for the notation $\calT_3$ is that the set of lattice points is naturally endowed with a tree structure: two points are \textbf{adjacent} if the corresponding hexagons share an edge. Seen as a graph, this set is then isomorphic to the infinite trivalent tree. In what follows, we will call $\mathbf{o}$ the root of the tree $\mathcal T_3$, leading to an \textbf{ancestor/descendant} order for lattice points.

We will also write $B_R(\mathbf{o})\subset\HH^2$ for the disk of radius $R$ around $\mathbf{o}$. We define the discrete ball of radius $R$ around a lattice point $x$ as 
\[
\mathcal B_R(x) = \calT_3 \cap B_R(x)
\] 
and write
\[
N_\ell(R) = \#\calB_R(\mathbf{o})
\]
and note that in fact $N_\ell(R)$ is the cardinality of $\mathcal B_R(\gamma\cdot \mathbf{o})$ for any $\gamma \in \Gamma_\ell$. 

The main point of this section is the following estimate:

\begin{prp}\label{prop:speedCV} For all $\ell\geq 1$ and all $R>0$, we have
\[
\frac{\log(N_\ell(R))}{R} \geq \delta_\ell- \frac{\ell+8C_\ell+\log(20/3)}{R}.
\]
\end{prp}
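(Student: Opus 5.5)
The plan is a sub-multiplicativity (Fekete-type) argument: show that $N_\ell$ is approximately sub-multiplicative up to an additive shift in the radius, and deduce a lower bound on $\log N_\ell(R)/R$ in terms of the exponential growth rate. That growth rate equals the critical exponent $\delta_\ell$ by Patterson's theorem, since $\Gamma_\ell$ is convex cocompact.

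\textbf{Step 1: approximate sub-multiplicativity.} I aim to prove an inequality of the form
\[
N_\ell(R+S+K) \geq c\, N_\ell(R)\, N_\ell(S)
\]
with $K = \ell + 8C_\ell$, or something comparable, and an explicit constant $c$ (something like $3/20$).

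The idea is to use the tree structure of $\calT_3$. Given a lattice point $x\in\calB_R(\mathbf{o})$ and a lattice point $y\in\calB_S(\mathbf{o})$, we translate $y$ by the group element $\gamma_x$ with $\gamma_x\mathbf{o}=x$. By the triangle inequality, $d(\mathbf{o},\gamma_x y) \le R+S$.

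The difficulty is that the map $(x,y)\mapsto \gamma_x y$ is not injective, because of cancellations in the tree. To fix this, I restrict to a fraction of the pairs that do not backtrack. Concretely, $\calB_S(\mathbf{o})$ splits into three branches according to the first edge of the path from $\mathbf{o}$. For a given $x$, at most one branch points back toward the parent of $x$.

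If necessary, I insert a short corridor between the two pieces: one or two hexagon crossings, each costing at most $\ell+2C_\ell$, or $\ell/2+2C_\ell$ per side crossing. This forces the concatenated tree paths to be reduced. Reduced words in the free product of three $\ZZ/2$'s are unique, so the map becomes injective on the chosen pairs. Each extra crossing adds a bounded amount to the distance, which produces the additive constant $\ell+8C_\ell$.

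Only a fixed fraction of $\calB_S(\mathbf{o})$ lies in a single branch, and the counting of branches and parity choices gives the constant $3/20$. This is where the $\log(20/3)$ should come from.

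\textbf{Step 2: iterate.} Set $f(R)=\log\big(c\,N_\ell(R-K)\big)$, or an equivalent normalization, so that $f$ becomes super-additive: $f(R+S)\ge f(R)+f(S)$.

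Fekete's lemma for super-additive functions then gives $f(R)/R \le \lim_{T\to\infty} f(T)/T = \delta_\ell$. That is the wrong direction, so I want sub-multiplicativity instead:
\[
N_\ell(R+S) \le \tfrac{20}{3}\, e^{\ell+8C_\ell}\, N_\ell(R)\, N_\ell(S),
\]
or a variant of it.

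With $g(R)=\log\big(\tfrac{20}{3}e^{\ell+8C_\ell}N_\ell(R)\big)$ sub-additive, Fekete gives $g(R)/R \ge \inf_T g(T)/T = \lim_{T\to\infty} g(T)/T = \delta_\ell$. This is exactly the claimed inequality.

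So Step 1 must actually be an \emph{upper} bound on $N_\ell(R+S)$. The argument for it is the following. Each lattice point $z$ with $d(\mathbf{o},z)\le R+S$ has its tree geodesic pass near the point of the hyperbolic geodesic $[\mathbf{o},z]$ at distance $R$ from $\mathbf{o}$. Choose a lattice point $x$ on the tree path from $\mathbf{o}$ to $z$ near that point. Then, up to an error of $\ell+8C_\ell$ coming from hexagon diameters, $x\in\calB_{R}(\mathbf{o})$ and $x^{-1}z\in\calB_S(\mathbf{o})$. The map $z\mapsto(x,x^{-1}z)$ is injective, since $z=\gamma_x(\gamma_x^{-1}z)$, and the bounded fudge in the radii is absorbed by the multiplicative constant.

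\textbf{Main obstacle.} The hard step is the geometric claim that the tree path from $\mathbf{o}$ to $z$ stays within a uniform distance, of order $C_\ell+\ell/2$, of the hyperbolic geodesic. Equivalently, the walls of the hexagons crossed by $[\mathbf{o},z]$ are exactly the walls separating $\mathbf{o}$ from $z$, since each wall is a geodesic line. Knowing this, one can choose $x$ to be the lattice point of a hexagon that $[\mathbf{o},z]$ meets at time $\approx R$. The distances from that hexagon's $\mathbf{o}$-translate to the geodesic are bounded by the hexagon's inradius-type quantities, and these give the $8C_\ell$ and $\ell$ terms.

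Because the separating walls cut every path, the hexagons met by the geodesic are exactly the vertices of the tree path. So the nontrivial part is bounding the distance from $x$ to the geodesic point. For this I would use the fact that the part of $H_\ell$ met by a geodesic lies within $C_\ell+\ell/2$ of $\mathbf{o}$ plus short-side contributions.
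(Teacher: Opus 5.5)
Your final route matches the paper's: an approximate sub-multiplicativity as in Lemma \ref{lem:mult}, followed by Fekete's lemma. Your injection $z\mapsto(x,\gamma_x^{-1}z)$ is a legitimate substitute for the paper's covering of $\calS_{R+r}$ by descendant balls. The gap is the sentence ``the bounded fudge in the radii is absorbed by the multiplicative constant.''

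What the injection actually gives is $N_\ell(R+S)\leq N_\ell(R+a)\,N_\ell(S+b)$ with $a,b>0$. Replacing $N_\ell(R+a)$ by $K\,N_\ell(R)$ requires a doubling estimate that you neither state nor prove, and monotonicity of $N_\ell$ goes the other way. This is exactly where the paper does its work and where the factor $e^{8C_\ell+\ell}$ comes from:
\begin{enumerate}
\item It takes the intermediate point in $\calS_R$ via Proposition \ref{prop:ancestor}. The first factor then carries no error, and the second carries $a=4C_\ell+\ell/2$.
\item It reruns the same covering with the roles of $R$ and $r$ exchanged, to get $\#\calS_{r+a}\leq\frac{2}{3}N_\ell(r)\,N_\ell(2a)$.
\item It bounds $N_\ell(8C_\ell+\ell)\leq 5e^{8C_\ell+\ell}$ by packing disks of radius $\log(3)/2$.
\end{enumerate}
So $e^{\ell+8C_\ell}$ is not a ``hexagon diameter'' error on its own. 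It is a radius error turned into a multiplicative factor by a volume bound, and your proposal does not contain that conversion.

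There is also a cheaper repair that avoids multiplicative constants entirely. Suppose $N_\ell(R+S)\leq N_\ell(R+a)\,N_\ell(S+b)$ for all $R,S\geq 0$. Applying this at $(R+b,S+a)$ shows that $T\mapsto N_\ell(T+a+b)$ is sub-multiplicative. Fekete then gives $\log N_\ell(R)\geq\delta_\ell\,(R-a-b)$ for every $R>0$; this is trivial when $R\leq a+b$, since $N_\ell\geq 1$. Because $\delta_\ell\leq 1$, this implies the proposition whenever $a+b\leq \ell+8C_\ell$.

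To get such $a,b$, choose $x$ to be the center of a tile containing the point of $[\mathbf{o},z]$ at distance $R$ from $\mathbf{o}$. This gives $a=b=C_\ell+\ell/4$, since every vertex of $H_\ell$, hence all of $H_\ell$, lies within $C_\ell+\ell/4$ of $\mathbf{o}$. Note also that $z\mapsto(x,\gamma_x^{-1}z)$ is injective for \emph{any} choice of $x$. So the wall/tree-path statement you single out as the main obstacle, while true, is not needed.

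With either repair the argument is complete. As written, it does not yet produce any explicit constant, which is the whole content of the statement.
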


For sake of clarity and completeness, we give again the details of Proposition \ref{prop:speedCV} here, using some simplifications due to the fact that the model we use is less random than the general case presented in \cite{Mathien}. We note that it's also possible to produce a two-sided bound (see \cite[Corollary 2.11]{Mathien}), but because we only need one side, we will not include details here.

The proof of Proposition \ref{prop:speedCV} can be reduced to the following lemma, based on the underlying tree structure. It expresses the fact that up to a multiplicative constant, $N_\ell(R)$ is sub-multiplicative, so $\log N_\ell(R)$ is larger than a certain linear function of $R$.

\begin{lem}\label{lem:mult}
	Fix $R,\ell > 0$.
	Then \[\forall r \geq 0,\quad  N_\ell(R+r) \leq \frac{20}{3} e^{8C_\ell + \ell}  N_\ell(R) N_\ell(r).\]
\end{lem}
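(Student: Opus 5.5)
The plan is to use the tree structure of $\calT_3$ to split any lattice point at distance at most $R+r$ from $\mathbf{o}$ into an ancestor near distance $R$ and a descendant of that ancestor within distance roughly $r$. The relevant geometric fact is this. Consider a lattice point $y$ and the tree geodesic from $\mathbf{o}$ to $y$. The corresponding hexagons form a chain, each glued to the next along a side of length $\ell/2$. The hyperbolic geodesic $[\mathbf{o},y]$ crosses exactly the sides separating consecutive hexagons of this chain. These sides lie on pairwise disjoint geodesics that are ultraparallel, with common perpendiculars of length bounded below by the other hexagon sides. So the hyperbolic geodesic passes through every hexagon of the chain, and it exits each one through the side of length $\ell/2$ leading to the next. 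Since every point of a side of length $\ell/2$ lies within $C_\ell+\ell/4$ of the center of the adjacent hexagon, every lattice point $x$ on the tree path from $\mathbf{o}$ to $y$ satisfies
\[
d(\mathbf{o},x)+d(x,y)\leq d(\mathbf{o},y)+2\left(C_\ell+\tfrac{\ell}{4}\right)+(\text{small}).
\]
I will aim for an error of the form $4C_\ell+\ell/2$; the constant $8C_\ell+\ell$ in the statement suggests applying this twice, or losing a factor of $2$ somewhere.

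\textbf{Step 1.} Fix $y\in\calB_{R+r}(\mathbf{o})$. If $d(\mathbf{o},y)\leq R$, then $y\in\calB_R(\mathbf{o})$. Otherwise, walk along the tree path from $\mathbf{o}$ to $y$ and let $x$ be the last ancestor of $y$ with $d(\mathbf{o},x)\leq R$. Consecutive lattice points are at distance $2C_\ell$, so $d(\mathbf{o},x)> R-2C_\ell$. The quasi-additivity estimate above then gives
\[
d(x,y)\leq r+2C_\ell+4C_\ell+\tfrac{\ell}{2}.
\]
This bounds $N_\ell(R+r)$ by a sum over $x\in\calB_R(\mathbf{o})$ of the number of lattice points in $\calB_{r+K}(x)$, with $K=6C_\ell+\ell/2$. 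Using that $N_\ell(s)$ does not depend on the center, we get $N_\ell(R+r)\leq N_\ell(R)\,N_\ell(r+K)$.

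\textbf{Step 2.} Bound $N_\ell(r+K)$ by $N_\ell(r)$ times a constant. Any $z\in\calB_{r+K}(x)$ has a tree ancestor relative to $x$ lying in $\calB_r(x)$, at tree distance $m$ from $z$ with $2C_\ell m\lesssim K+(\text{quasi-additivity error})$. In fact, a lower bound $d\geq$ (tree distance)$\cdot c$ is needed here. Hyperbolic distance grows at least linearly in tree distance, with rate roughly $\ell/2$-ish per two steps, but I will instead argue with the disjointness of hexagons. The number of descendants within tree distance $m$ is at most $2^{m+1}$. The more robust route is to count points of $\calB_{r+K}\setminus\calB_r$ near each sphere point through an area comparison: disjoint copies of the disk of radius $C_\ell$ around lattice points. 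I expect the final constant $\tfrac{20}{3}e^{8C_\ell+\ell}$ to come from this descendant count, with the $20/3$ arising from summing a geometric series of tree branchings $3\cdot 2^{k}$.

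\textbf{Main obstacle.} The main obstacle is the quasi-additivity inequality. It is what controls how far a tree path can stray from the hyperbolic geodesic, and it requires using that the separating sides are disjoint geodesics crossed in order. I would prove it first, by a convexity argument in the chain of hexagons.
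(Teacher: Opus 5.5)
Your Step 1 is correct and is the geometric core of the paper's proof. The geodesic $[\mathbf{o},y]$ enters the hexagon of $x$ through a side of length $\ell/2$, at a point $h$ with $d(h,x)\le C_\ell+\ell/4$. The triangle inequality through $h$ then gives your quasi-additivity with error exactly $2C_\ell+\ell/2$, with no further term. Covering $\mathcal B_{R+r}(\mathbf{o})$ by the balls $\mathcal B_{r+K}(x)$, $x\in\mathcal B_R(\mathbf{o})$, is a slightly cleaner form of the paper's covering of the shell $\mathcal S_{R+r}$ by descendant balls of points of $\mathcal S_R$. There is an arithmetic slip, though. With $d(\mathbf{o},x)>R-2C_\ell$, your inequality gives $d(x,y)\le r+2C_\ell+(2C_\ell+\ell/2)$, so $K=4C_\ell+\ell/2$. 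With your $K=6C_\ell+\ell/2$, the exponent $8C_\ell+\ell$ is out of reach.

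The genuine gap is Step 2. It lists possibilities rather than giving an argument, and the first route it proposes rests on a false claim. Hyperbolic distance does not grow in tree distance at a rate comparable to $\ell$. Let $\sigma_1,\sigma_2$ be the reflections in two sides of length $\ell/2$, and let $b$ be the length of the short side joining them, so $\cosh b=\cosh(\ell/2)/(\cosh(\ell/2)-1)$ and $b\sim 2e^{-\ell/4}$. Then $\sigma_1\sigma_2$ translates by $2b$ along the line through that side. Hence the lattice points $(\sigma_1\sigma_2)^n\mathbf{o}$, at tree distance $2n$ from $\mathbf{o}$, stay within hyperbolic distance $\ell/2+O(1)$ of $\mathbf{o}$ for all $n\lesssim e^{\ell/4}$. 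Counting descendants by tree depth would therefore produce a constant doubly exponential in $\ell$.

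The missing idea is to apply Step 1 a second time, with $(R,r)$ replaced by $(r,K)$. Every point of $\mathcal B_{r+K}(x)$ then lies within $K+4C_\ell+\ell/2=2K$ of a point of $\mathcal B_r(x)$. So $N_\ell(r+K)\le N_\ell(r)\,N_\ell(2K)$, and hence $N_\ell(R+r)\le N_\ell(2K)\,N_\ell(R)\,N_\ell(r)$. Only the count at the fixed scale $2K=8C_\ell+\ell$ remains. The paper's crude area bound $N_\ell(T)\le 5e^{T}$ finishes the proof; it uses disjoint disks of radius $\frac12\log 3$, one inside each hexagon. Your disks of radius $C_\ell$ also give $N_\ell(T)=O(e^{T})$ uniformly in $\ell$, since distinct lattice points are at distance at least $2C_\ell\ge\log 3$.

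This is where $8C_\ell+\ell$ comes from: your guess of ``applying this twice'' was right, but it has to be carried out. The $\frac{20}{3}$ in the paper comes from the shell comparison $N_\ell(R)\le 2\#\mathcal S_R$, the descendant fraction $\frac23$ and the area constant, not from a geometric series of branchings. Your remark about an area comparison near each sphere point points in this direction. But until ``near'' is quantified by rerunning Step 1, it does not bound $N_\ell(r+K)$ by $N_\ell(r)$.
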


\begin{proof}[Proof of Proposition \ref{prop:speedCV}]
	As a consequence of Lemma \ref{lem:mult}, the function $R \mapsto \frac{20}{3} e^{8C_\ell+\ell} N_\ell(R)$ is sub-multiplicative, and so by Fekete's lemma, we have that \[\delta_\ell = \lim_{R \to \infty} \frac{\log N_\ell(R)}{R} = \inf_{R \geq 0} \frac{\log N_\ell(R) + \log (20/3) + 8C_\ell +\ell}{R},\] so \[\frac{\log N_\ell(R)}{R}-\delta_\ell \geq - \frac{\log (20/3) + 8C_\ell +\ell}{R},\]
which proves the proposition.
\end{proof}

Before proving Lemma \ref{lem:mult}, let us introduce some additional notation, and some elementary facts.

\begin{defn}
For $R\geq 0$, we define \[\mathcal S_R = \lbrace y \in \calT_3;\; R-2C_\ell \leq \mathrm{d}(\mathbf{o}, y) < R\rbrace.\]
\end{defn}

The definition of $\mathcal S_R$ is motivated by the following fact:
\begin{prp}\label{prop:ancestor}
	Let $x$ be a lattice point such that $\mathrm{d}(\mathbf{o}, x) > R-2C_\ell$. Then there exists an ancestor $y$ of $x$ in the rooted tree $\mathcal T_3$ such that $y \in \mathcal S_R$.
\end{prp}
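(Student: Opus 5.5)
Walk down the geodesic of ancestors of $x$ in the tree $\calT_3$, namely the sequence $\mathbf{o}=y_0,y_1,\dots,y_n=x$ in which consecutive points are adjacent. The argument is a discrete intermediate value argument for the function $k\mapsto \mathrm{d}(\mathbf{o},y_k)$. It starts at $0<R$; we may assume $R>0$, and we note that if $R-2C_\ell<0$ then $\mathbf{o}$ itself lies in $\mathcal S_R$. It ends at $\mathrm{d}(\mathbf{o},x)>R-2C_\ell$. Consecutive values differ by at most $2C_\ell$.

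The key geometric fact is this: if $y$ and $y'$ are adjacent lattice points, then $\mathrm{d}(y,y')=2C_\ell$. Indeed, the corresponding hexagons $\gamma H_\ell$ and $\gamma' H_\ell$ share a side of length $\ell/2$, and $\gamma'=\gamma\sigma$ where $\sigma$ is the reflection in the corresponding side of $H_\ell$. So $y'$ is the reflection of $y$ across the geodesic containing that side. Since $\mathbf{o}$ lies at distance $C_\ell$ from each of the three sides of length $\ell/2$, and the perpendicular foot lies on the side itself, we get $\mathrm{d}(y,y')=2C_\ell$. The foot lies on the side because $\mathbf{o}$ is the incenter and the hexagon is right-angled and convex; I would check this, or simply use the bound $\le 2C_\ell$ obtained via the path through the shared side. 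The triangle inequality then gives
\[
|\mathrm{d}(\mathbf{o},y_{k+1})-\mathrm{d}(\mathbf{o},y_k)|\le 2C_\ell .
\]

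Now let $k$ be the smallest index with $\mathrm{d}(\mathbf{o},y_k)>R-2C_\ell$. Such an index exists because $k=n$ qualifies. If $R-2C_\ell<0$, take $y=\mathbf{o}$ directly. Otherwise $k\ge1$, and minimality gives $\mathrm{d}(\mathbf{o},y_{k-1})\le R-2C_\ell$.

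There are two cases.
\begin{itemize}
\item If $\mathrm{d}(\mathbf{o},y_{k-1})=R-2C_\ell$ exactly, then $y_{k-1}\in\mathcal S_R$, since $R-2C_\ell<R$ when $C_\ell>0$.
\item Otherwise $\mathrm{d}(\mathbf{o},y_{k-1})<R-2C_\ell$. Then $R-2C_\ell<\mathrm{d}(\mathbf{o},y_k)<R-2C_\ell+2C_\ell=R$, so $y_k\in\mathcal S_R$.
\end{itemize}

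Finally, the chosen point must be an ancestor of $x$. If the intended meaning of ancestor is non-strict, $y_k=x$ is allowed. If strict ancestors are required, the case $y=x$ needs separate handling, presumably by allowing $y=x$ when $x\in\mathcal S_R$. I would adopt the convention that a vertex counts as its own ancestor.

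The main obstacle is the precise value of the adjacent-step distance. The rest is bookkeeping with the half-open interval defining $\mathcal S_R$.
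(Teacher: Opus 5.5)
Your proof is correct and is essentially the paper's argument: take the shallowest ancestor of $x$ (with $x$ counted as its own ancestor, as the paper also does) whose distance to $\mathbf{o}$ exceeds $R-2C_\ell$, and use that adjacent lattice points are at distance at most $2C_\ell$. Your separate treatment of the case $\mathrm{d}(\mathbf{o},y_{k-1})=R-2C_\ell$ is slightly more careful than the paper, which asserts the strict bound $\mathrm{d}(p,y)<2C_\ell$ even though adjacent lattice points are at distance exactly $2C_\ell$. That strict bound would fail when $R=2C_\ell$ and $x$ is a child of the root, and your case split covers that situation.
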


\begin{proof}
	Consider the set of ancestors $z$ of $x$ such that $\mathrm{d}(\mathbf{o}, z) > R-2C_\ell$. This is a non-empty set (it contains $x$ itself), and its element $y$ of smallest depth satisfies $\mathrm{d}(\mathbf{o}, y) <R$. Otherwise, its parent $p$ would also be at distance more than $R-2C_\ell$ from $\mathbf{o}$, because $\mathrm{d}(p, y) < 2C_\ell$.
\end{proof}
In addition, because of the exponential growth rate, the cardinality $N_R$ of $\mathcal B_R(o)$ is controlled by the one of $\mathcal S_R$.
\begin{prop}\label{prop:BorS}
	\[\# \mathcal  S_R \leq N_\ell(R) \leq 2\# \mathcal  S_R.\] 
\end{prop}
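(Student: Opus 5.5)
The lower bound is immediate: every $y\in\mathcal S_R$ is a lattice point with $\mathrm{d}(\mathbf{o},y)<R$, so $\mathcal S_R\subset\calB_R(\mathbf{o})$ and $\#\mathcal S_R\leq N_\ell(R)$. For the upper bound, split $\calB_R(\mathbf{o})$ into
\[
A=\st{x\in\calT_3}{\mathrm{d}(\mathbf{o},x)\leq R-2C_\ell}
\]
and its complement $\calB_R(\mathbf{o})\setminus A$. The complement consists of lattice points with $R-2C_\ell<\mathrm{d}(\mathbf{o},x)\leq R$. Up to the boundary case $\mathrm{d}(\mathbf{o},x)=R$, which I will handle by the convention on open versus closed disks or by a harmless perturbation of $R$, this complement lies in $\mathcal S_R$. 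So it suffices to build an injection $f\colon A\to\mathcal S_R$, which gives $N_\ell(R)\leq 2\#\mathcal S_R$.

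To construct $f$, I use the tree structure of $\calT_3$. Every non-root vertex has exactly two children, which I label once and for all as ``left'' and ``right''. The root has three children, and I designate one as ``right'' and one as ``left'' (the third is unused). For $x\in A$, consider the infinite descending ray starting at $x$ that first moves to the right child and then always to the left child. Since $\Gamma_\ell$ acts properly discontinuously, only finitely many lattice points lie in any ball, so the distances to $\mathbf{o}$ along this ray tend to infinity. Let $f(x)$ be the first vertex on the ray with $\mathrm{d}(\mathbf{o},f(x))>R-2C_\ell$. This vertex is not $x$ itself. Its parent $p$ satisfies $\mathrm{d}(\mathbf{o},p)\leq R-2C_\ell$, and adjacent lattice points are at distance $<2C_\ell$ (as used in Proposition \ref{prop:ancestor}, since $\mathbf{o}$ is at distance $C_\ell$ from each side). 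Hence $\mathrm{d}(\mathbf{o},f(x))<R$, so $f(x)\in\mathcal S_R$.

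Injectivity is the key combinatorial point. The tree path from $x$ to $v=f(x)$ is one ``right'' step followed by some number $k\geq 0$ of ``left'' steps. Hence $x$ is recovered from $v$ as the parent of the deepest vertex on the ancestral line of $v$ (including $v$ itself) that is a right child. This depends only on $v$, so $f$ is injective and $\#A\leq\#\mathcal S_R$. The only step needing care is the distance bound $\mathrm{d}(p,y)<2C_\ell$ for adjacent lattice points, together with the boundary convention at distance exactly $R$. Neither should cause difficulty, since the former is already used in Proposition \ref{prop:ancestor}.
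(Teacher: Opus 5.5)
Your proof is correct (up to a small slip noted below), but it takes a different route from the paper. The paper argues that $\mathcal{B}_R(\mathbf{o})$ is a finite subtree of $\mathcal{T}_3$ containing the root. It observes that every vertex of this subtree missing at least one child lies in $\mathcal{S}_R$, so all leaves and all degree-$2$ vertices do. It then applies the degree count $n_3\leq n_1$, valid in any finite tree of maximal degree $3$.

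You instead split off the deep part $A$ and inject it into the shell via the ``one step right, then always left'' ray. This is the classical injection from internal vertices of a binary tree into its leaves. The gain is that you never need $\mathcal{B}_R(\mathbf{o})$ to be ancestor-closed. The paper's count does need this: a vertex whose parent is outside the ball but whose children are inside would have degree $2$ without lying in $\mathcal{S}_R$. Ancestor-closedness rests on a geometric fact, namely that a parent is strictly closer to $\mathbf{o}$ than its children, since their common side lies on the perpendicular bisector of the two lattice points. You use only local finiteness of the orbit and the one-step distance bound. The paper's version is shorter once the subtree property is granted.

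The slip is that adjacent lattice points are at distance exactly $2C_\ell$, not less. The dihedral symmetry of $H_\ell$ fixes $\mathbf{o}$, so the perpendicular from $\mathbf{o}$ to each side of length $\ell/2$ lands at its midpoint, and the neighbour is the reflection of $\mathbf{o}$ in that side. The strict inequality you import from the proof of Proposition \ref{prop:ancestor} is therefore really an equality, and the triangle inequality only gives $\mathrm{d}(\mathbf{o},f(x))\leq R$.

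Equality is harmless, though. It would place $p$ on the geodesic segment from $\mathbf{o}$ to $f(x)$, so $\mathbf{o}$ would lie on the symmetry axis of the hexagon of $p$, beyond $p$. That axis exits this hexagon perpendicularly through the midpoint of the opposite side, which lies on the boundary of the convex set $\Gamma_\ell\cdot H_\ell$. This forces $p=x=\mathbf{o}$ and $R=2C_\ell$, in which case $A=\{\mathbf{o}\}\subset\mathcal{S}_R$ and the bound is trivial.

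Finally, the boundary case must be settled by taking disks open, consistent with the strict inequalities in $\mathcal{S}_R$ and $\mathcal{B}^+_R$, not by perturbing $R$. With closed disks the statement is false at $R=2C_\ell$: there $N_\ell(R)=4$ (the root and its three neighbours) while $\#\mathcal{S}_R=1$.
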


\begin{proof}
	It is for example a consequence of \cite[Proposition 2.14]{Mathien}: $\mathcal  B_R$ is a subtree of $\mathcal T_3$, and $\mathcal  S_R$ contains at least all the points of degree 1 (the leaves) and 2 (the nodes with one child) of this tree. Because the number of vertices of degree 3 in such a tree is bounded by the number of leaves, the results follows.
\end{proof}

We now have all what we need to prove Lemma \ref{lem:mult}.
\begin{proof}[Proof of Lemma \ref{lem:mult}]
	Take $R, r >0$. We bound $\# \mathcal S_{R+r}$ rather than $N_\ell(R+r)$, and then apply Proposition \ref{prop:BorS}.
	
	Let $x \in \mathcal S_{R+r}$. Because of Proposition \ref{prop:ancestor}, the geodesic from $\mathbf{o}$ to $x$ intersects a hexagon $H$ such that its center $x_H = \gamma_H \cdot \mathbf{o}$ is in $\mathcal S_R$. Let $l$ be the length of this geodesic, and $h$ be the first point of this geodesic in $H$ (see Figure \ref{fig:pathgeod}).

\begin{figure}[ht]
	\centering
	\begin{overpic}{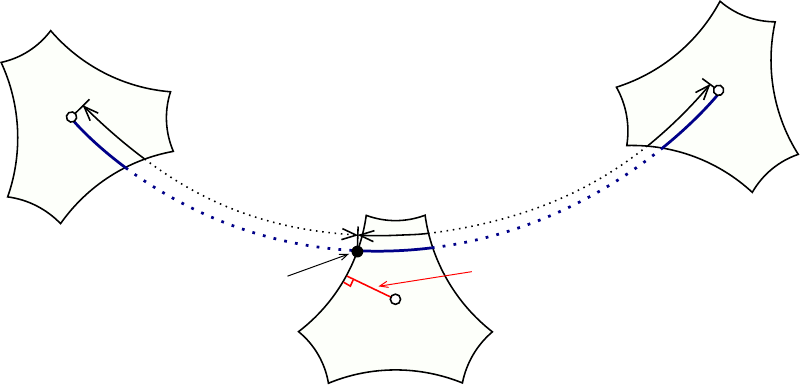}
	\put(6,32){$\mathbf{o}$}
	\put(47,8){$x_H$}
	\put(42,3){$H$}
	\put(34,12.5){$h$}
	\put(59.5,13.5){$\leq C_\ell$}
	\put(29,23){$l_1$}
	\put(69.5,25){$l_2$}
	\put(90,34){$x$}
	\end{overpic}
	\caption{A path between $\mathbf{o}$ and some other vertex $x$. All the symbols refer to the proof of Lemma \ref{lem:mult}.}
	\label{fig:pathgeod}
\end{figure}

	 The point $h$ is on one of the sides of length $\ell/2$ of $H$, so we have  \[\mathrm{d}(x_H, h) \leq C_\ell+\frac{\ell}{4}.\] The point $h$ divides the geodesic into two parts, one from $\mathbf{o}$ to $h$ of length $l_1$ and one from $h$ to $x$ of length $l_2$.
	Because $x_H$ is in $\mathcal S_R$, we have that \[R-2C_\ell \leq \mathrm{d}(\mathbf{o}, x_H) \leq \mathrm{d}(\mathbf{o}, h) + \mathrm{d}(h, x_H)\leq  l_1 + \frac{\ell}{4} + C_\ell.\] As a consequence, \[l_1 \geq R - 3C_\ell - \frac{\ell}{4},\] and so \[\l_2 = l - l_1 \leq r + 3C_\ell + \frac{\ell}{4}.\] 
In particular, \[\mathrm{d}(x, x_H) \leq l_2 + \frac{\ell}{4} + C_\ell = r +4C_\ell + \frac{\ell}{2}.\] 
	
We now define for $y\in \calT_3$,
\[\mathcal B_R^+(y) = \{\text{descendants }z \text{ of }y\text{ such that }\mathrm{d}(y, z)< R\}
\]	
and observe that by symmetry, for any lattice point $y \neq \mathbf{o}$, we have $\#\mathcal B_R^+(y) = \frac{2}{3} N_\ell(R)$. 	
	
	What we have just shown implies that $x \in \mathcal B_{r+4C_\ell+\ell/2}^+(x_H)$, and more generally, we have shown that
	\[ \mathcal S_{R+r} \subset \bigcup_{x_H \in \mathcal S_R} \mathcal B_{r+4C_\ell+\ell/2}^+(x_H).
	\] and as such

\begin{equation} \label{eq:ssmult} 
\#\mathcal S_{R+r} \leq \sum_{x_H \in \mathcal S_R} \#\mathcal B_{r+4C_\ell+\ell/2}^+(x_H).
	\end{equation}	
	
	In addition 
	\begin{equation} \label{eq:decendant_bound}
	\# \mathcal B_{r+4C_\ell+\ell/2}^+(x_H) = \frac{2}{3} N_\ell(r+4C_\ell+\ell/2) \leq \frac{4}{3} \# \mathcal S_{r+4C_\ell+\ell/2},
	\end{equation}
	 where we used Proposition \ref{prop:BorS} for the last inequality. One can exchange the role of $R$ and $r$ in equation $\eqref{eq:ssmult}$, and take $R = 4C_\ell + {\ell}/{2}$ to get that 
	\[
	\# \mathcal S_{r+4C_\ell+\ell/2} \leq \frac{2}{3}\; N_\ell(r)\cdot N_\ell(8C_\ell + \ell).
	\]
	On the other hand, for any $T >0$,  
	\[
	N_\ell(T) \leq 5 e^T.
	\] 
	Indeed, no matter what $\ell$ is, $H_\ell$ contains a disk of radius $\log(3)/2$ (the radius of the largest inscribed disk in an ideal triangle) around its midpoint. The area of this disk is $2\pi \cdot\left(\frac{2}{\sqrt{3}}-1\right)>\frac{3\pi}{10}$. So we get
	\begin{align*} \frac{3\pi}{10} N_\ell(T) &\leq \mathrm{Area}(B_{ T+\log(3)/2}(\mathbf{o})) \\ &\leq \frac{3\pi}{2} \cdot e^T.
	\end{align*}
	This means that $\# \mathcal S_{r+4C_\ell+\ell/2} \leq \frac{10}{3} N_\ell(r)\cdot e^{8 C_\ell + \ell}$. When we put this together with \eqref{eq:ssmult} and \eqref{eq:decendant_bound}, we obtain
	\[N_\ell(R+r) \leq 2\# \calS_{R+r} \leq  \frac{20}{3}\; e^{8C_\ell+\ell}\; \#\calS_R \cdot  N_\ell(r) \leq  \frac{20}{3}\; e^{8C_\ell+\ell}\; N_\ell(R) \cdot  N_\ell(r),
	\]
	where we have used Proposition \ref{prop:BorS} twice.
\end{proof}

\section{The proof of the main theorem}\label{sec_proof}

The estimate on the diameter of the random surfaces $S_{g,\ell}$ now follows along the same lines as in \cite{BollobasFernandezdelaVega,BudzinskiCurienPetri,Mathien}, using the improved input from the previous two sections. We will describe how this works here. We will focus on the new input, referring to the earlier papers whenever possible. 

Concretely, we will prove the following proposition, which immediately implies Theorem \ref{th:main} (potentially increasing the constant $C>0$ slightly).

\begin{prp} Set $\ell = \ell(g) = 4\log \log(g)$. There exists a universal constant $C>0$ such that
\[
\lim_{g\to\infty}
\PP\Big(\mathrm{diam}(S_{g,\ell})  \leq \log(g) + 25\cdot \log\log(g) + C\Big)  = 1.
\]
\end{prp}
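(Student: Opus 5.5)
We follow the Bollob\'as--Fernandez-de-la-Vega strategy, as adapted in \cite{BudzinskiCurienPetri}. The random cubic graph $G$ (configuration model on $2g-2$ vertices) is locally tree-like. Each vertex of $G$ corresponds to a pair of pants $P_\ell$, which is the double of the hexagon $H_\ell$. Since the twists are zero, the universal cover of a tree-like neighbourhood in $S_{g,\ell}$ is tiled by copies of $H_\ell$ in exactly the pattern of $\Gamma_\ell$. Hence, as long as the explored neighbourhood of a point is a tree, the number of pants reachable within distance $R$ is at least a constant multiple of $N_\ell(R)$.

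\textbf{Step 1 (reduction to pants centres).} It suffices to bound distances between the centres of pants, that is, between the points corresponding to $\mathbf{o}$. Any point of $S_{g,\ell}$ lies within $C_\ell+\ell/4+O(1)$ of such a centre. Since $C_\ell = \ell/4+O(1)$ as $\ell\to\infty$, this costs only $O(\ell)=O(\log\log g)$.

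\textbf{Step 2 (growth of neighbourhoods).} Fix two vertices $u,v$ of $G$. Explore the neighbourhoods of $u$ and $v$ in breadth-first order in the configuration model, up to a radius $R$ chosen so that $N_\ell(R)\approx \sqrt{g}\,(\log g)^{a}$. Proposition~\ref{prop:speedCV} gives
\[
\log N_\ell(R)\geq \delta_\ell R-\ell-8C_\ell-O(1).
\]
Lemma~\ref{lem_crit_exp} gives $\delta_\ell\geq 1-Ce^{-\ell/4}=1-C/\log g$. Combining these, one can take
\[
R=\tfrac12\log g+O(\ell)+O(a\log\log g),
\]
where the factor $(1-C/\log g)^{-1}$ adds only $O(1)$ to $R$ when $R=O(\log g)$.

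A union bound over the cycle-creating half-edge pairings bounds the number of collisions during the exploration. Only $O(1)$ collisions, or more carefully a bounded number of defects, occur with probability $1-o(g^{-2})$. A bounded number of defects removes only a proportion of the tree bounded via Proposition~\ref{prop:BorS}. Thus the frontier set $\mathcal S_R$, seen from both $u$ and $v$, has at least $c\sqrt{g}(\log g)^{a}/e^{O(\ell)}$ unmatched half-edges.

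\textbf{Step 3 (connecting the two balls).} Condition on both explorations. The probability that no half-edge on $u$'s frontier is paired with one on $v$'s frontier is at most
\[
\exp\!\left(-c\,\frac{|\partial_u|\,|\partial_v|}{g}\right).
\]
With the choice of $a$ above, this is $o(g^{-2})$. A union bound over the $O(g^2)$ pairs $(u,v)$ then shows that with high probability every pair is joined through the glued pants.

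The resulting path runs from $u$'s centre to a frontier hexagon (distance $\le R$), across one gluing (distance $O(C_\ell+\ell)$), and into $v$'s ball. Its length is $2R+O(\ell)$, which equals $\log g+O(\log\log g)$. Collecting the explicit multiples of $\ell=4\log\log g$ that appear gives the constant $25$: the $\ell+8C_\ell\approx 3\ell$ loss in Proposition~\ref{prop:speedCV} is counted once per side, and there are further $\ell/4$-type boundary terms and the $\log\log g$ needed for the union bound.

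\textbf{Main obstacle.} The main difficulty is Step~2. One must ensure that the collisions occurring in the random exploration do not destroy the lower bound inherited from Proposition~\ref{prop:speedCV}. To handle this, we would prune at most one subtree per collision, using the symmetry fact $\#\mathcal B^+_R(y)=\tfrac23 N_\ell(R)$ to show that the surviving frontier is still a constant fraction of $N_\ell(R)$. We would also have to check that the error terms stay uniform in $\ell$.
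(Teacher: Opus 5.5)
\textbf{Gap in Step~2: collisions.} It is not true that only $O(1)$ collisions occur with probability $1-o(g^{-2})$ while you explore $\asymp\sqrt g(\log g)^a$ pants. When the explored component has $t$ pants, the next pairing closes a cycle with probability about $t/(6g)$, so an exploration of size $\tau$ has about $\tau^2/(12g)$ collisions. Step~3 needs $|\partial_u|\,|\partial_v|/g$ to be at least of order $\log g$, so each ball necessarily has at least of order $\log g$ self-collisions, and a bounded number of them has probability tending to $0$. The explored ball is therefore not tree-like. Pruning one subtree per collision does not keep a constant fraction unless you control \emph{when} the collisions happen, since each collision near the root removes a constant fraction.

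This is the missing idea, and the paper supplies it with a two-phase argument:
\begin{itemize}
\item One explores by always gluing the boundary component closest to $v$, and stops after a fixed number of steps $\tau_2=(25(g-1)\log(g-1))^{1/2}$. The frontier size ($\tau_2+3$ minus three times the number of bad steps) then comes for free, and lattice counting is only used to bound the radius $R_{\tau_2}$ from above.
\item Up to $\tau_1=(g-1)^{1/2-\eps}$ there are at most $k=3$ bad steps with probability $1-O(g^{-2\eps k})$, where $\eps k>1$. Hence some boundary component present at time $6k$ has no bad step among its descendants, which gives $\tfrac23N_\ell(R_{\tau_1}-R_{6k}-\ell/2-4C_\ell)\le\tau_1$.
\item Between $\tau_1$ and $\tau_2$ there are at most $\log^3 g$ bad steps. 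These spoil only $O(\log^3 g)$ of the at least $\tau_1+3-3k$ boundary components present at time $\tau_1$, giving $(\tau_1-3k-2\log^3 g)\tfrac23N_\ell(R_{\tau_2}-R_{\tau_1}-\ell/2-4C_\ell)\le\tau_2$.
\item Applying Proposition~\ref{prop:speedCV} to both factors bounds $R_{\tau_2}$.
\end{itemize}

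\textbf{Second gap: the coefficient $25$.} Your account of the coefficient is not a derivation, and it rests on a wrong asymptotic: $C_\ell$ is not $\ell/4+O(1)$. As $\ell\to\infty$ the hexagon $H_\ell$ converges to an ideal triangle and $C_\ell\to\tfrac12\log 3$; the paper uses $C_\ell\le C(1+\ell^{-2})$. What grows like $\ell/4$ is the distance from $\mathbf{o}$ to the vertices of $H_\ell$. With your value of $C_\ell$, the losses you list already add up to about $7\ell=28\log\log g$ before the union-bound term: about $\ell$ in Step~1, $\ell+8C_\ell\approx 3\ell$ per side, and $O(\ell)$ for the gluing. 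That is more than $25\log\log g$.

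Since the statement fixes the coefficient, the bookkeeping must actually be done. In the paper, per side:
\begin{itemize}
\item the two applications of Proposition~\ref{prop:speedCV} cost $2\ell$;
\item the two shifts $-\ell/2-4C_\ell$ cost $\ell$;
\item $\log\tau_2$ contributes $\tfrac12\log g+\tfrac12\log\log g$.
\end{itemize}
This gives $R_{\tau_2}\le\tfrac12\log g+\tfrac{25}{2}\log\log g+C$. The choice $\ell=4\log\log g$ is what makes the error $e^{-\ell/4}\log\tau_2$ coming from Lemma~\ref{lem_crit_exp} bounded.
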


\begin{proof}
We will start with some notation. $G_g$ will denote a random cubic graph on $V_g=\{1,\ldots,2g-2\}$, distributed according to the configuration model. So every vertex $v\in V_g$ corresponds to a copy of $P_\ell$ that we use to build $S_{g,\ell}$ and we will conflate the vertex $v$ and the midpoint of the corresponding pair of pants in what follows. Our goal is to show that, with high probability, for every $v,w\in V_g$ we can find a short (for the hyperbolic metric on $S_{g,\ell}$) path between the two corresponding pairs of pants.

We do this using a dynamical exploration (or peeling algorithm) of $S_{g,\ell}$ around these vertices. That is, given $v\in V_g$, we may define a sequence of random surfaces
\[
S_{g,\ell}^{(0)}(v),\quad S_{g,\ell}^{(1)}(v),\quad\ldots , \quad S_{g,\ell}^{(2g-2)}(v),
\]
where $S_{g,\ell}^{(0)}(v)$ is a disjoint union of $2g-2$ pairs of pants, labeled using the elements of $V_g$ and, for $t=0,\ldots,2g-3$, the surface $S_{g,\ell}^{(t+1)}(v)$ is obtained from $S_{g,\ell}^{(t)}(v)$ as follows
\begin{enumerate}
\item~\label{item_normal} if the component of $S_{g,\ell}^{(t)}(v)$ containing $v$ has a non-empty boundary, we pick the boundary component closest (for the hyperbolic metric) to $v$ and glue it to another boundary component of $S_{g,\ell}^{(t)}(v)$ that is picked uniformly at random,
\item~\label{item_disconnect} if not, we pick two boundary components of $S_{g,\ell}^{(t)}(v)$ uniformly at random and glue them together.
\end{enumerate}
We observe that the distribution of $S_{g,\ell}^{(2g-2)}(v)$ is the same as that of $S_{g,\ell}$. 

We will run the exploration until time $\tau_2 = (25(g-1)\log(g-1))^{1/2}$. We will also set $\tau_1=(g-1)^{1/2-\eps}$ and call the part of the exploration during the times in $[1,\tau_1]$ the \textbf{first phase} and the part during the times $[\tau_1+1,\tau_2]$ the \textbf{second phase}. Step $t+1$ will be called \textbf{bad} if during that step, two boundary components of the connected of $S_{g,\ell}^{(t)}(v)$ containing $v$ are paired. Using exactly the same argument as in \cite[page 372]{BudzinskiCurienPetri} and using that $x\mapsto \frac{x+1}{6g-6-2x-1}$ is an increasing function of $x \in [1,\tau_2]$ for all $g\geq 2$, we obtain that 
\[
\PP\left(\begin{array}{c}\text{at least } k \text{ bad steps are made} \\ \text{during the first phase}\end{array} \right) \leq \frac{1}{k!}(g-1)^{-2\eps k},
\]
for any $k \in \NN$ and any $\eps>0$. We will not fix these parameters just yet, but we will assume that they are such that $\frac{1}{k!}(g-1)^{-2\eps k}\stackrel{g\to\infty}{=} o\left((g-1)^{-2}\right)$. Likewise,
\[
\PP\left( \begin{array}{c}\text{at least } \log^3(g-1) \text{ bad steps are} \\ \text{made during the second phase}\end{array}\right) \stackrel{g\to\infty}{=} o\left((g-1)^{-2}\right).
\]
Now we need a slightly more effective version of \cite[Lemma 4]{BudzinskiCurienPetri} (see also \cite[Lemma 3.4]{Mathien} that applies in a more general context). We will set
\[
R_t = \max\left\{ \mathrm{d}(v,\beta); \; \begin{array}{c} \beta \text{ a boundary component of the connected} \\
\text{component of } S^{(t)}_{g,\ell}(v)\text{ containing }v\end{array}  \right\}.
\]
Our goal is to prove an upper bound on $R_{\tau_2}$ under the assumption that fewer than $k$ bad steps happen in the first phase and at most $\log^3(g-1)$ during the second. We also assume that we are never in the situation described in item \ref{item_disconnect} above (if not the surface would be disconnected).

We observe that $S_{g,\ell}$ is covered by a tree of pants: a hyperbolic surface $T_\ell$ of infinite area that is obtained by gluing countably many copies of $P_\ell$ together according to the combinatorics of an infinite trivalent tree. Alternatively, we obtain $T_\ell$ by doubling the convex subset $\Gamma_\ell \cdot H_\ell\subset \HH^2$ along its boundary. We will keep one of the copies of the orbit $\Gamma_\ell\cdot\mathbf{o}$ and will call the corresponding points on $T_\ell$ and $S_{g,\ell}$ the \textbf{midpoints} of the pairs of pants they lie in. Like in \cite[page 388]{BudzinskiCurienPetri}, we observe that the number of midpoints in $T_\ell$ at hyperbolic distance at most $R$ from a fixed midpoint equals $N_\ell(R)$.

We now start by analyzing the first phase. Our assumptions on the number of bad steps imply that $S^{(6k)}_{g,\ell}(v)$ has at least $2k$ boundary components. Thus among the descendants of at least one of them, that we will call $\eta$, no bad steps will happen during the first phase. Because of the order in which we discover the pairs of pants in $S_{g,\ell}$, all the descendants of $\eta$ that are at distance at most $R_{\tau_1}-3 \cdot C_\ell-\ell/4$ from $v$ are a part of $S^{(\tau_1)}_{g,\ell}(v)$. Now we need to figure out their distance from the midpoint $m_\eta$ of the pair of pants ``before'' $\eta$. Let $x$ be one of these midpoints. We have
\[
d(x,m_{\eta}) \leq d(x,v) + d(v,m_\eta)
\]
So, because $d(v,m_\eta) \leq R_{6k}+\ell/4+C_\ell$, we obtain that all descendants of $m_\eta$ at distance at most $R_{\tau_1}-R_{6k}-\ell/2-4C_\ell$ from $v$ are part of $S^{(\tau_1)}_{g,\ell}(v)$. So we obtain that

\[
\frac{2}{3}N_\ell(R_{\tau_1}-R_{6k}-\ell/2-4C_\ell) \leq \tau_1.
\]

If $S^{(\tau_1)}_{g,\ell}$ is of genus $0$ (i.e. if no bad steps happen at all up until time $\tau_1$), then it would have $\tau_1+3$ boundary components. Because there are at most $k$ bad steps, $S^{(\tau_1)}_{g,\ell}$ has at least $\tau_1+3-3k$ boundary components. Because there are at most $\log^3(g)$ bad steps during the second phase, at least $\tau_1+3-3k-2\log^3(g)$ of them have no bad steps among their descendants. With the same argument as before, this means that
\[
(\tau_1-3k-2\log^3(g))\cdot \frac{2}{3}N_\ell(R_{\tau_2}-R_{\tau_1}-\ell/2-4C_\ell) \leq \tau_2.
\]
Putting this together with the lower bound on $\tau_1$, we get
\begin{multline*}
\log\Big(\frac{2}{3}N_\ell(R_{\tau_1}-R_{6k}-\ell/2-4C_\ell) -3k-2\log^3(g)\Big)  \\
+ \log\Big( \frac{2}{3}N_\ell(R_{\tau_2}-R_{\tau_1}-\ell/2-4C_\ell) \Big)
\leq \log(\tau_2).
\end{multline*}
We will now start writing ``$C$'' to indicate a constant that might change from line to line. Moreover, we will assume that $k$ and $g$ are such that $3k+2\log^3(g) \leq \frac{2}{6}N_\ell(R_{\tau_1}-R_{6k}-\ell/2-4C_\ell)$ and use that, when $y \in (0,x/2]$, $\log(1-y/x) \geq -2y/x$. This yields that
\[
\log\Big(N_\ell(R_{\tau_1}-R_{6k}-\ell/2-4C_\ell) \Big) 
+ \log\Big( N_\ell(R_{\tau_2}-R_{\tau_1}-\ell/2-4C_\ell) \Big) 
\leq \log(\tau_2)+C.
\]
Now using Proposition \ref{prop:speedCV}, we obtain that
\[
\delta_\ell\cdot (R_{\tau_1}-R_{6k}-\ell/2-4C_\ell) + \delta_\ell\cdot (R_{\tau_2}-R_{\tau_1}-\ell/2-4C_\ell) \leq \log(\tau_2)+ 2\ell+16C_\ell+C
\]
Using  \cite[Example 2.2.7.(iii)-(v)]{Buser_book}, it can also be shown that $C_\ell \leq C\left(1 + \frac{1}{\ell^2}\right)$. So we apply this in combination with Lemma \ref{lem_crit_exp} and the fact that $R_{6k} \leq (12k+1)\cdot C_\ell$, to obtain
\[
R_{\tau_2}  \leq \log(\tau_2) + C e^{-\ell/4} \log(\tau_2) +  3 \ell + C\cdot k + C 
\]
From the above, we get that our best choice is to set $\ell=4\log\log(g)$ and to make $k$ as small as possible a constant. This means that $k=3$, because we need there to exist $\eps<\frac{1}{2}$ such that $2\eps k > 2$. This also guarantees that $3k+\log^3(g) \leq \frac{2}{6}N_\ell(R_{\tau_1}-R_{6k}-\ell/2 - 4C_\ell)$ for large enough $g$. So we obtain that
\[
R_{\tau_2}  \leq \frac{1}{2}\log(g) + \frac{25}{2} \log\log(g) + C.
\]
Using the exact same proof as in \cite[page 374]{BudzinskiCurienPetri}, one proves that, with probability $1-o\left((g-1)^{-2}\right)$, the neighborhoods we explore around $v$ and $w$ merge for any $v,w\in V_g$. Summing over the $\leq 4g^2$ pairs vertices and using that the surface doesn't disconnect with high probability, we conclude that the diameter is at most $2\cdot R_{\tau_2}$, which proves the proposition.
\end{proof}

\bibliography{bib}
\bibliographystyle{alpha}

\end{document}